\newtheorem{theo}{Theorem}
\newtheorem{lema}[theo]{Lemma}
\newtheorem{corollari}[theo]{Corollary}
\DeclareMathOperator{\tr}{tr}
\DeclareMathOperator{\spec}{sp}
\def\vec0{\mbox{\boldmath $0$}}
\def\A{\mbox{\boldmath $A$}}
\def\Re{\mathbb R}
\begin{document}
\title{A Spectral Characterization of
Strongly \\ Distance-Regular Graphs with Diameter Four.
}

\author{M.A. Fiol
\\ \\
{\small $^b$Universitat Polit\`ecnica de Catalunya, BarcelonaTech} \\
{\small Dept. de Matem\`atica Aplicada IV, Barcelona, Catalonia}\\
{\small (e-mail: {\tt
fiol@ma4.upc.edu})} \\
 }
\date{}
\maketitle

\begin{abstract}
A graph $G$ with $d+1$ distinct eigenvalues is called strongly distance-regular if $G$ itself is distance-regular, and its distance-$d$ graph $G_d$ is strongly-regular. In this note we provide a spectral characterization of those distance-regular graphs with diameter $d=4$ which are strongly distance-regular. As a byproduct, it is shown that all bipartite strongly distance-regular graphs with such a diameter are antipodal.
\end{abstract}

\noindent{\em Keywords:} Distance-regular graph; Strongly distance-regular graph; Spectrum.

\noindent{\em AMS subject classifcations:} 05C50, 05E30.

\section{Introduction}
For background on distance-regular graphs and strongly regular graphs, we refer the reader to Brouwer, Cohen, and Neumaier \cite{bcn89}, Brouwer and Haemers \cite{bh12}, and Cameron \cite{c78}.
A {\em strongly distance-regular graph} is a distance-regular graph $G$ (of diameter $d$, say) with the property that its distance-$d$ graph $G_d$ is strongly regular.
Known examples of strongly distance-regular graphs are the strongly regular graphs (since $G_d$ is the complement of $G$), the antipodal distance-regular graphs (where $G_d$ is a disjoint union of complete graphs), and all the distance-regular graphs with $d=3$ and third largest eigenvalue $\lambda_2=-1$.
This last result was reported by Brouwer \cite{b84}, and Brouwer, Cohen, and Neumaier \cite{bcn89} and, in fact, the same conclusion was reached by the author \cite{f00} by only requiring the regularity of $G$. In fact there are some infinite families of this type, such as the generalized hexagons and the Brouwer graphs (see, for instance, \cite{bcn89}).

The above situation suggests the open problem of deciding whether or not the above known families of strongly distance-regular graphs exhaust all the possibilities (see \cite[Conjecture 3.6]{f01} or Cameron \cite{c03}).
Going one step further in this direction, here we prove that a distance-regular graph $G$ with  five distinct eigenvalues $\lambda_0>\lambda_1>\cdots >\lambda_4$ (the case of diameter four) is strongly distance regular if and only  an equality involving them, and the intersection parameters $a_1$ or $b_1$, is satisfied.
Then, as a consequence, it is shown that all bipartite strongly distance-regular graphs with such a diameter are antipodal.

\section{The result}
In proving our result, we use the following scalar product:
\begin{equation}
\label{prod}
\langle p,q\rangle_{G}=\frac{1}{n}\tr (p(\A)q(\A))= \frac{1}{n}\sum_{i=0}^d m_i p(\lambda_i)q(\lambda_i), \qquad p,q\in \Re_{d}[x],
\end{equation}
and  the following lemma (see \cite{f97,f01}):
\begin{lema}
\label{lema}
Let $G$ be a distance-regular graph with spectrum $\spec G=\{\lambda_0,\lambda_1^{m_1},\ldots,\lambda_d^{m_d}\}$, where $\lambda_0>\lambda_1>\cdots>\lambda_d$. Then,
\begin{itemize}
\item[$(a)$]
$G$ is $r$-antipodal if and only if
$$
m_i=\frac{\pi_0}{\pi_i}\quad \mbox{\rm ($i$ even)}, \qquad
m_i=(r-1)\frac{\pi_0}{\pi_i}\quad \mbox{\rm ($i$ odd)}.
$$
\item[$(b)$]
$G$ is strongly distance-regular if and only if, for some positive constants $\alpha,\beta$,
$$
m_i\pi_i=\alpha\quad \mbox{\rm ($i$ odd)}, \qquad m_i\pi_i=\beta\quad \mbox{\rm ($i$ even, $i\neq 0$)}.
$$
\end{itemize}
\end{lema}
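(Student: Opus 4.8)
The plan is to reduce both parts of the lemma to one spectral identity for the predistance polynomial $p_d$, and then argue by keeping track of the signs $(-1)^i$. Recall that for a distance-regular graph $\A_i=p_i(\A)$; that $p_0,\dots,p_d$ are orthogonal for the product \eqref{prod} with $\|p_i\|^2=p_i(\lambda_0)=k_i$ (the $i$-th valency), so that $\deg p_d=d$ and $p_d\perp\Re_{d-1}[x]$; and that the principal idempotents are $\E_i=\phi_i(\A)/\phi_i(\lambda_i)$, where $\phi_i(x)=\prod_{j\ne i}(x-\lambda_j)$ — so $\phi_i(\lambda_i)=(-1)^i\pi_i$, $\E_0=\frac1n\J$, $\tr(\E_i\E_j)=\delta_{ij}m_i$, and $\A_d\E_i=p_d(\lambda_i)\E_i$. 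The key identity I would establish first is
\begin{equation}\label{key}
p_d(\lambda_i)\,m_i\,\pi_i=(-1)^i\,k_d\,\pi_0\qquad(0\le i\le d).
\end{equation}
To get it, compute $\tr(\A_d\E_i)$ in two ways: it equals $p_d(\lambda_i)\tr\E_i=p_d(\lambda_i)m_i$, and it also equals $\frac{1}{\phi_i(\lambda_i)}\tr\bigl(p_d(\A)\phi_i(\A)\bigr)=\frac{n}{\phi_i(\lambda_i)}\langle p_d,\phi_i\rangle_G$. Since $\phi_i$ is monic of degree $d$ and $p_d\perp\Re_{d-1}[x]$, we have $\langle p_d,\phi_i\rangle_G=\langle p_d,x^d\rangle_G$, which does not depend on $i$; hence $p_d(\lambda_i)\,m_i\,\phi_i(\lambda_i)$ is the same for all $i$, and evaluating at $i=0$ (where $m_0=1$, $p_d(\lambda_0)=k_d$, $\phi_0(\lambda_0)=\pi_0$) identifies this common value as $k_d\pi_0$; using $\phi_i(\lambda_i)=(-1)^i\pi_i$ gives \eqref{key}.

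Two consequences of \eqref{key}, since $k_d,\pi_0,m_i,\pi_i>0$: (i) the sign of $p_d(\lambda_i)$ is exactly $(-1)^i$, so $p_d$ never vanishes on $\spec G$, and (as $\A_d\E_i=p_d(\lambda_i)\E_i$) the positive, resp.\ negative, eigenvalues of $\A_d=G_d$ sit exactly on the even-, resp.\ odd-indexed eigenspaces; and (ii) $m_i\pi_i$ is constant over the odd indices, resp.\ over the even indices $\ne0$, if and only if $p_d(\lambda_i)$ is. Now for part $(a)$: if $G$ is $r$-antipodal then $G_d$ is a disjoint union of copies of $K_r$, so $\A_d$ has only the eigenvalues $k_d=r-1$ and $-1$; by (i), $p_d(\lambda_i)=r-1$ for $i$ even and $=-1$ for $i$ odd, and \eqref{key} yields the stated multiplicities. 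Conversely, feeding those multiplicities into \eqref{key} gives $p_d(\lambda_i)=k_d$ ($i$ even) and $p_d(\lambda_i)=-k_d/(r-1)$ ($i$ odd), so $(\A_d-k_d\I)(\A_d+\tfrac{k_d}{r-1}\I)=0$; comparing diagonal entries forces $k_d=r-1$ and hence $\A_d^2=(k_d-1)\A_d+k_d\I$, which says that non-adjacent vertices of $G_d$ have no common neighbour, so $G_d$ is a disjoint union of $K_r$'s and $G$ is $r$-antipodal.

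For part $(b)$: $G_d$ is strongly regular exactly when $\A_d^2$ is a linear combination of $\I$, $\A_d$ and $\J$ (expand and read off the numbers of common neighbours of adjacent and of non-adjacent pairs); since $\E_0=\frac1n\J$ and $\A_d\E_i=p_d(\lambda_i)\E_i$, this happens if and only if $\{p_d(\lambda_i):i\ge1\}$ has at most two elements. By (i) those two values must then be some $\rho_+>0$, attained on all even-indexed $\E_i$ with $i\ne0$, and some $\rho_-<0$, attained on all odd-indexed ones; and by \eqref{key} this is equivalent to $m_i\pi_i=k_d\pi_0/\rho_+=:\beta>0$ for even $i\ne0$ and $m_i\pi_i=-k_d\pi_0/\rho_-=:\alpha>0$ for odd $i$ — and conversely. (One uses (i) to know $\rho_+,\rho_-\ne0$, so that $\alpha,\beta$ are finite and positive; equivalently, $G_d$ can never be complete multipartite.)

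The genuine content is \eqref{key}; everything afterwards is sign bookkeeping, and both parts then drop out at once. The point that needs care in \eqref{key} is to use $\deg p_d=d$ together with $p_d\perp\Re_{d-1}[x]$, which is exactly what makes $\langle p_d,\phi_i\rangle_G$ the same for every $i$; this is where the fact that $p_d$ is the top predistance polynomial enters in full, and is the main thing to get right.
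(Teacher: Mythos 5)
Your proof is correct: the key identity $p_d(\lambda_i)\,m_i\,\pi_i=(-1)^i k_d\,\pi_0$, derived by computing $\tr(\A_d\E_i)$ two ways and using $p_d\perp\Re_{d-1}[x]$, is exactly the right engine, and the sign bookkeeping for parts $(a)$ and $(b)$ is sound. Note that the paper itself states the lemma without proof, citing \cite{f97,f01}; your argument is essentially the one used in those references, so there is nothing to flag.
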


Now we are ready to prove our main result:

\begin{theo}
\label{teo(basic)}
Let $G$ be a distance-regular graph with $n$ vertices, diameter $d=4$, and distinct eigenvalues $\lambda_0(=k)>\lambda_1>\cdots>\lambda_4$. Then $G$ is strongly distance-regular if and only if
 \begin{equation}
 \label{condition1}
 (1+\lambda_1)(1+\lambda_3)=(1+\lambda_2)(1+\lambda_4)=-b_1.
 \end{equation}
Moreover, in this case, $G$ is antipodal if and only if either,
\begin{equation}
 \label{condition2}
\lambda_1\lambda_3=-k,\qquad \mbox{or}\qquad \lambda_1+\lambda_3=a_1
\end{equation}
\end{theo}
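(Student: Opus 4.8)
The plan is to use Lemma~\ref{lema} to turn both assertions into polynomial relations among the eigenvalues, and then to collapse those relations to the stated form by means of the identities linking the spectrum of a distance-regular graph to its intersection numbers. Let $p_0=1,p_1=x,p_2,p_3,p_4$ be the predistance polynomials of $G$ (the orthogonal sequence for $\langle\cdot,\cdot\rangle_G$ normalised by $p_i(\lambda_0)=k_i$, so that $p_i(\A)=\A_i$ is the distance-$i$ matrix), defined recursively by $c_{i+1}p_{i+1}=(x-a_i)p_i-b_{i-1}p_{i-1}$, and let $H=\sum_{i=0}^4p_i$ be the Hoffman polynomial, so $H(\lambda_0)=n$ and $H(\lambda_i)=0$ for $i\ge1$. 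It is known that $m_i\pi_i\,p_4(\lambda_i)=(-1)^ik_4\pi_0$, so the two equalities $m_1\pi_1=m_3\pi_3$ and $m_2\pi_2=m_4\pi_4$ of Lemma~\ref{lema}$(b)$ are equivalent to $p_4(\lambda_1)=p_4(\lambda_3)$ and $p_4(\lambda_2)=p_4(\lambda_4)$.

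Writing $p_4=H-\psi$ with $\psi:=p_0+p_1+p_2+p_3$ and using $H(\lambda_i)=0$ for $i\ge1$, the strongly-distance-regular condition becomes $\psi(\lambda_1)=\psi(\lambda_3)$ and $\psi(\lambda_2)=\psi(\lambda_4)$. Since $\psi$ has degree $3$, putting $g:=c_2c_3\,\psi=x^3+Ax^2+Bx+C$ (with $A,B,C$ explicit polynomials in the $a_i,b_i,c_i$; e.g.\ $A=c_3-a_1-a_2$) and combining the identity $g(u)-g(v)=(u-v)\bigl(u^2+uv+v^2+A(u+v)+B\bigr)$ with the fact that $\lambda_1,\lambda_3$ are the two roots of $x^2-(\lambda_1+\lambda_3)x+\lambda_1\lambda_3$, one obtains that $\psi(\lambda_1)=\psi(\lambda_3)$ holds iff $\lambda_1\lambda_3=(\lambda_1+\lambda_3)^2+A(\lambda_1+\lambda_3)+B$, and similarly for $\{\lambda_2,\lambda_4\}$. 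Thus $G$ is strongly distance-regular iff the pairs $\{\lambda_1,\lambda_3\}$ and $\{\lambda_2,\lambda_4\}$, through their sum $s$ and product $p$, both lie on the conic $p=s^2+As+B$.

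The core of the argument is to identify this double condition with~\eqref{condition1}. Here one uses the relations between eigenvalues and parameters coming from
\[
\textstyle\prod_{i=1}^4(x-\lambda_i)=c_2c_3c_4\,H(x)=P_4(x)+c_4P_3(x)+c_3c_4P_2(x)+c_2c_3c_4(x+1),
\]
where $P_j:=c_1\cdots c_j\,p_j$ is the monic form of $p_j$: comparing coefficients expresses $\lambda_1+\lambda_2+\lambda_3+\lambda_4$, $\lambda_1\lambda_3+\lambda_2\lambda_4$, and the remaining symmetric functions of the two pairs as explicit expressions in $a_i,b_i,c_i$ (in particular $P_2(-1)=1+a_1-k=-b_1$). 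Substituting the two conic conditions into these and simplifying, one should obtain that jointly they are equivalent to $(1+\lambda_1)(1+\lambda_3)=(1+\lambda_2)(1+\lambda_4)$ with the common value equal to $P_2(-1)=-b_1$, i.e.\ exactly~\eqref{condition1}; conversely,~\eqref{condition1} reinserted into the same relations returns both pairs to the conic. I expect this elimination to be the main obstacle: the two conic conditions are a priori not symmetric in the two pairs, so one must exploit the feasibility relations (together with the strict inequalities $\lambda_1>\lambda_2$, $\lambda_3>\lambda_4$, which keep $(\lambda_1+\lambda_3)-(\lambda_2+\lambda_4)\neq0$) to show that they telescope into the symmetric form of~\eqref{condition1}.

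For the last statement, assume $G$ is strongly distance-regular. By Lemma~\ref{lema}$(a)$, $G$ is $r$-antipodal for some $r$ iff, in addition, $m_i\pi_i=\pi_0$ for even $i$; since the even case above already gives $m_2\pi_2=m_4\pi_4$, this reduces to $m_2\pi_2=\pi_0$, that is $p_4(\lambda_2)=k_4=p_4(\lambda_0)$ (equivalently: $G_4$ has only the two eigenvalues $k_4$ and $p_4(\lambda_1)$, hence is a disjoint union of cliques). Translating $p_4(\lambda_2)=p_4(\lambda_0)$ through the eigenvalue/parameter relations already at hand, the resulting condition factors and yields $\lambda_1\lambda_3=-k$ or $\lambda_1+\lambda_3=a_1$, which is~\eqref{condition2}; in fact, for a strongly distance-regular graph, either of these forces $\lambda_1,\lambda_3$ to be precisely the two roots of $P_2(x)=x^2-a_1x-k$. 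The announced corollary then follows at once: a bipartite graph has $a_1=0$ and spectrum symmetric about the origin, so $\lambda_3=-\lambda_1$ and hence $\lambda_1+\lambda_3=0=a_1$; being strongly distance-regular, it is therefore antipodal.
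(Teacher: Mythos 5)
Your reduction of Lemma~\ref{lema}$(b)$ to $p_4(\lambda_1)=p_4(\lambda_3)$ and $p_4(\lambda_2)=p_4(\lambda_4)$ (via $m_i\pi_i\,p_4(\lambda_i)=(-1)^ik_4\pi_0$ and the Hoffman polynomial) is sound, but the proof stops exactly where the theorem actually lives. The passage from your two ``conic'' conditions $p=s^2+As+B$ to the linear conditions $(1+\lambda_1)(1+\lambda_3)=(1+\lambda_2)(1+\lambda_4)=-b_1$ is asserted (``one should obtain\dots'', ``I expect this elimination to be the main obstacle''), not carried out; as you set it up, it requires computing $A$, $B$ and all four elementary symmetric functions of $\lambda_1,\dots,\lambda_4$ in terms of the $a_i,b_i,c_i$ and then performing a genuinely nontrivial elimination. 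The same happens in the antipodal part: ``the resulting condition factors and yields~\eqref{condition2}'' is precisely the computation that needs to be exhibited (and note that, given~\eqref{condition1}, the two alternatives in~\eqref{condition2} are equivalent to one another --- they are not two branches of a factorization). So both halves of the argument have their decisive step missing.

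The ingredient you are not using is that the first four moments are already known in closed form: $\langle 1,1\rangle_G=1$, $\langle x,1\rangle_G=0$, $\langle x^2,1\rangle_G=k$, $\langle x^3,1\rangle_G=ka_1$. Hence for \emph{any} cubic $q$ the value $\langle q,1\rangle_G$ is explicit without ever expanding $p_2,p_3$. Taking $q=(x-\lambda_0)(x-\lambda_2)(x-\lambda_4)$, the scalar product $\frac1n\sum_i m_iq(\lambda_i)$ collapses to a signed combination of $m_1\pi_1$ and $m_3\pi_3$, so $m_1\pi_1=m_3\pi_3$ is equivalent to $ka_1-(\lambda_0+\lambda_2+\lambda_4)k-\lambda_0\lambda_2\lambda_4=0$, which with $\lambda_0=k$ and $k=a_1+b_1+1$ factors immediately as $(1+\lambda_2)(1+\lambda_4)=-b_1$; the companion cubic $(x-\lambda_0)(x-\lambda_1)(x-\lambda_3)$ gives $(1+\lambda_1)(1+\lambda_3)=-b_1$, and $(x-\lambda_1)(x-\lambda_2)(x-\lambda_3)$ handles $m_4\pi_4=m_0\pi_0$ for the antipodal statement, where the identity $k(k+\lambda_1\lambda_3)=\lambda_2(k+\lambda_1\lambda_3)$ forces $\lambda_1\lambda_3=-k$ since $\lambda_2\neq k$. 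This is the paper's route; it bypasses the predistance polynomials entirely and turns your ``main obstacle'' into a two-line factorization. If you wish to keep your framework, you must actually perform the elimination you describe; as written, the proposal is an outline with the central identity unverified.
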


\begin{proof}
Notice first that the multiplicities $m_0(=1),m_1,\ldots, m_4$, satisfy the following equations:
$$
\sum_{i=0}^4 m_i=n,\quad \sum_{i=0}^4 m_i\lambda_i=0,\quad \sum_{i=0}^4 m_i\lambda_i^2=nk,\quad \sum_{i=0}^4 m_i\lambda_i^3=nka_1,
$$
or, in terms of the scalar product \eqref{prod},
\begin{equation}
\label{closed-paths}
\langle 1,1\rangle_G=1,\quad \langle x,1\rangle_G=0,\quad
\langle x^2,1\rangle_G=k,\quad \langle x^3,1\rangle_G=ka_1.
\end{equation}
From this, and expanding the product below, we have that
\begin{equation}
\label{sp1}
\langle (x-\lambda_0)(x-\lambda_2)(x-\lambda_4), 1\rangle_G = ka_1-(\lambda_0+\lambda_2+\lambda_4)k-\lambda_0\lambda_2\lambda_4.
\end{equation}
Moreover, by using \eqref{prod}, we get that
\begin{equation}
\label{sp2}
\langle (x-\lambda_0)(x-\lambda_2)(x-\lambda_4), 1\rangle_G=0 \quad \iff\quad m_1\pi_1=m_3\pi_3, \\
\end{equation}
Thus, from \eqref{sp1} and \eqref{sp2},
\begin{equation}
\label{sp3}
m_1\pi_1=m_3\pi_3\quad \iff\quad (\lambda_1+1)(\lambda_3+1)=a_1+1-k=-b_1
\end{equation}
since $c_1=1$. Reasoning in the same way with $\langle (x-\lambda_0)(x-\lambda_1)(x-\lambda_3), 1\rangle_G=0$, we get:
$$
m_2\pi_2=m_4\pi_4\quad \iff\quad (\lambda_2+1)(\lambda_4+1)=-b_1.
$$
Then, the characterization in \eqref{condition1} follows from Lemma \ref{lema}$(b)$.
To prove the condition in \eqref{condition2}, observe that, from the above and Lemma
\ref{lema}$(a)$ it suffices to show that $m_4\pi_4=m_0\pi_0$ or, equivalently,
$\langle (x-\lambda_1)(x-\lambda_2)(x-\lambda_3), 1\rangle_G=0$.
Now, this leads to the equality
$$
k(\lambda_1+\lambda_2+\lambda_3)+\lambda_1\lambda_2\lambda_3=ka_1=k(k-b_1-1)
$$
which, together with \eqref{sp3}, gives $k(k+\lambda_1\lambda_3)=\lambda_2(k+\lambda_1\lambda_3)$. But this can only occur when $k+\lambda_1\lambda_3=0$ or, equivalently,  $\lambda_1+\lambda_3=a_1$  (use \eqref{sp3} again). This completes the proof.
\end{proof}

For the case of bipartite graphs, the conditions in \eqref{condition2} clearly hold since $\lambda_3=-\lambda_1$ and $a_1=0$. Thus, every bipartite strongly distance-regular graph is antipodal. Besides, the condition \eqref{condition1} turns to be very simple:

\begin{corollari}
A bipartite distance-regular graph $G$ with diameter $d=4$ is strongly distance-regular if and only if $\lambda_1=\sqrt{k}$.
\end{corollari}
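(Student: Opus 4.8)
The plan is to specialize Theorem \ref{teo(basic)} to the bipartite case, where the spectrum and the intersection numbers become very rigid. First I would recall the standard fact that a distance-regular graph is bipartite if and only if its spectrum is symmetric about $0$. Since here there are $d+1=5$ distinct eigenvalues, this symmetry forces $\lambda_4=-\lambda_0=-k$ and $\lambda_3=-\lambda_1$, while the unique eigenvalue fixed by $\lambda\mapsto-\lambda$ must be the middle one, so $\lambda_2=0$. Moreover bipartiteness gives $a_1=0$, and combined with $c_1=1$ this yields $b_1=k-a_1-c_1=k-1$.

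Next I would substitute these values into the characterization \eqref{condition1}. The second equality there becomes $(1+\lambda_2)(1+\lambda_4)=(1+0)(1-k)=1-k=-(k-1)=-b_1$, which holds identically; hence in the bipartite setting that half of \eqref{condition1} is automatic and carries no information. The first equality becomes $(1+\lambda_1)(1+\lambda_3)=(1+\lambda_1)(1-\lambda_1)=1-\lambda_1^2$, and setting this equal to $-b_1=1-k$ gives $\lambda_1^2=k$. Since $\lambda_1>\lambda_2=0$, this is equivalent to $\lambda_1=\sqrt k$, which is the claimed condition.

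The converse is then immediate by running the same computation backwards: if $\lambda_1=\sqrt k$ then, using $\lambda_2=0$, $\lambda_3=-\sqrt k$, $\lambda_4=-k$ and $b_1=k-1$, both equalities in \eqref{condition1} are satisfied, so Theorem \ref{teo(basic)} applies and $G$ is strongly distance-regular. The only step that requires any attention — and it is a classical fact rather than a real obstacle — is pinning down the bipartite spectrum, in particular that the self-negating eigenvalue is exactly $\lambda_2=0$; once that is in place, the corollary is a one-line substitution into the already-established theorem.
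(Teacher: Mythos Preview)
Your argument is correct and follows exactly the paper's own approach: the paper's proof simply says to apply \eqref{condition1} with $\lambda_2=0$, $\lambda_3=-\lambda_1$, and $\lambda_4=-k$, which is precisely the substitution you carry out (with the extra bookkeeping that $b_1=k-1$ in the bipartite case). Your treatment is more detailed but not different in method.
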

\begin{proof}
Apply \eqref{condition1} with $\lambda_2=0$, $\lambda_3=-\lambda_1$, and $\lambda_4=-k$.
\end{proof}
Then, these graphs have spectrum
$$
\{k^1,\sqrt{k}^{n/2-k},0^{2k-2},-\sqrt{k}^{n/2-k},-k^{1}\}
$$
and, in fact, they constitute a well known infinite family (see Brouwer, Cohen and Neumaier \cite[p. 425]{bcn89}:
With $n=2m^2\mu$ and $k=m\mu$, they are precisely the incidence graphs of symmetric $(m,\mu)$-nets, with intersection array
$$
\{k, k-1, k-\mu, 1; 1, \mu, k-1,k\}.
$$
We finish this note with a question: Looking at the comprehensive table of distance-regular graphs in \cite{bcn89}, it turns out that all the strongly distance-regular graphs (bipartite or not) with diameter four are antipodal. Thus, at first sight, it seems that conditions \eqref{condition1} and \eqref{condition2} could be closely related (although we have not been able to prove it). Is that the case?

\noindent{\large \bf Acknowledgments.} 
This note was written while the author was visiting the Department of Combinatorics and Optimization (C\&O), in the University of Waterloo (Ontario, Canada).
The author sincerely acknowledges to the Department of C\&O the hospitality and facilities received.
Also, special thanks are due to Chris Godsil, with whom the author discussed the possibility of a characterization liked the one presented.

Research supported by the
{\em Ministerio de Ciencia e Innovaci\'on}, Spain, and the {\em European Regional Development Fund} under project MTM2011-28800-C02-01, and the {\em Catalan Research Council} under project 2009SGR1387.



\begin{thebibliography}{99}

%

\bibitem{b84}
A.E. Brouwer, Distance regular graphs of diameter 3 and strongly regular graphs,
{\it Discrete Math.} {\bf 49} (1984) 101--103.

\bibitem{bcn89}
A.E. Brouwer, A.M. Cohen, and A. Neumaier, \emph{Distance-Regular Graphs},
Springer-Verlag, Berlin-New York, 1989.

\bibitem{bh12}
A.E. Brouwer and W.H. Haemers, \emph{Spectra of Graphs},
Springer,
2012; available online at \url{http://homepages.cwi.nl/~aeb/math/ipm/}.

%
\bibitem{c78}
P. Cameron, Stronly distance-regular graphs, in {\it Selected Topics in Graph Theory} (L.J. Beineke and R.J. Wilson, eds.), Acadenic Press, New York, pp. 337--360.

\bibitem{c03}
P. Cameron, Research problems from the 18th British Combinatorial Conference, {\em Discrete Math.} {\bf 266} (2003), no. 1-3, 441--451. The 18th British Combinatorial Conference (Brighton, 2001).

%
%
%
%
%
%

\bibitem{f97}
M.A. Fiol, An eigenvalue characterization of antipodal distance-regular graphs, {\em Electron. J. Combin.} {\bf 4} (1997), \#R30.

\bibitem{f00}
M.A. Fiol,
A quasi-spectral characterization of strongly distance-regular graphs, {\em Electron. J. Combin.} {\bf 7} (2000), \#R51.

\bibitem{f01}
M.A. Fiol,
Some spectral characterization of strongly distance-regular graphs, {\em Combin. Probab. Comput.} {\bf 10} (2001), no. 2, 127--135.

%
%
%
%
%
%
%
%

\end{thebibliography}
\end{document}